\documentclass[12pt]{amsart}
\usepackage[utf8]{inputenc}
\usepackage[T1]{fontenc}
\usepackage[english]{babel}
\usepackage{amssymb}
\usepackage{mathrsfs}
\usepackage{graphicx}
\usepackage[all]{xy}
\usepackage{enumerate}
\RequirePackage{hyperref} % Required for customising links and the PDF
\hypersetup{pdfpagemode={UseOutlines},
bookmarksopen=true,
bookmarksopenlevel=0,
hypertexnames=false,
colorlinks=true, % Set to false to disable coloring links
citecolor=cyan, % The color of citations
linkcolor=blue, % The color of references to document elements (sections, figures, etc)
urlcolor=magenta, % The color of hyperlinks (URLs)
pdfstartview={FitV},
unicode,
breaklinks=true,
}

\thanks{The author thanks the Isaac Newton Institute for Mathematical Sciences for the support and hospitality during the programme {\em Operators, Graphs, Groups} when work on this note was undertaken. This work was supported by EPSRC grant no EP/Z000580/1}

\usepackage{geometry} 
\newtheorem{prop}{Proposition}[section]
\newtheorem{thm}[prop]{Theorem}
\newtheorem{lem}[prop]{Lemma}

\theoremstyle{definition}

\title[Nilpotent groups have polynomial filling]{Nilpotent groups have polynomially bounded homological filling invariants}
\author{Gabriel Pallier}
\date{March 2026}
\begin{document}
\bibliographystyle{alpha}
\maketitle

\begin{abstract}
    Gromov claimed, with a sketch of proof, that simply connected nilpotent Lie groups have polynomially bounded filling invariants. The literature establishes this, often with a stronger conclusion where the exponent of polynomiality is computed or estimated, for some classes of nilpotent groups, or ranges of filling degrees. We provide a proof, in part based on Gromov's hints, yielding at once (non-optimal) polynomial upper bounds on the homological filling invariants in every degree for all finitely generated nilpotent groups, or equivalently, for all simply connected nilpotent Lie groups having lattices. 
\end{abstract}

\section{Introduction}

The filling invariants quantify the finiteness properties of a group by measuring the hardness to fill cycles or spheres of a given volume by chains or balls in an appropriate model space (e.g. a contractible CW complex or Riemannian manifold) for this group.
In this note we are interested in integral homological filling invariants of finitely generated nilpotent groups. These groups have finite torsion subgroups, and torsion-free nilpotent groups have classifying spaces given as CW complexes with finitely many cells and the homotopy type of a compact nilmanifold \cite{MR28842}. If $\Gamma$ is a torsion-free nilpotent group and $X$ is the universal cover of such a $K(\Gamma,1)$ having cells in dimensions up to $n$, one defines for all $v \in \mathbf Z_{\geqslant 0}$ and $d \in \{2,\ldots, n \}$, the combinatorial filling volume
\[ \mathrm{cFV}_{\Gamma}^d(v) = \sup_{ c\in \mathrm{C}_{d-1}(X, \mathbf Z): \partial c = 0, \vert c \vert \leqslant v} \inf \{ \vert r \vert : r \in \mathrm{C}_d(X, \mathbf Z) : \partial r = c  \}
\]
where $\vert \cdot \vert$ is the $\ell^1$-norm. A variant denoted $\operatorname{FV}^d_\Gamma$ can be defined using lipschitz chains on $X$ once the cells have been given $\Gamma$-invariant flat metrics with the isometry type of polyhedra, or using lipschitz chains on the universal cover of the Riemannian nilmanifold, and these variants have the same asymptotic behavior when $v$ goes to infinity as a result of the Federer-Fleming theorem from \cite{MR123260} (see e.g. \cite[Theorem 6.6]{bader2026higherkazhdanpropertyunitary} and references there). When $d=2$, $\operatorname{cFV}_\Gamma^d$ or $\operatorname{FV}_\Gamma^d$ is closely related to the Dehn function defined in terms of group presentations.
The aim of this note is to prove the following.

\begin{thm}
\label{prop:high-nil-fill}
    Let $\Gamma$ be a finitely generated nilpotent group of cohomological dimension $n$ over $\mathbf Q$.
    Then the Dehn function and the homological filling functions $\operatorname{cFV}^d_\Gamma$ with integer coefficients are polynomially bounded for all $d \in \{2, \ldots, n\}$.
\end{thm}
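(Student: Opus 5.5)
We reduce first to the torsion-free case. The torsion subgroup of $\Gamma$ is finite and normal, so $\Gamma$ is quasi-isometric to the torsion-free quotient $\Gamma/\mathrm{Tor}(\Gamma)$. Polynomial boundedness of $\operatorname{cFV}^d$ up to degree $n$ is a quasi-isometry invariant: by the Federer--Fleming comparison recalled above, $\operatorname{cFV}^d$ agrees asymptotically with $\operatorname{FV}^d$, and the latter is a quasi-isometry invariant up to the usual equivalence. The cohomological dimension over $\mathbf Q$ equals the Hirsch length $n$ of the torsion-free quotient. So assume $\Gamma$ is torsion-free. By Malcev, $\Gamma$ is then a lattice in a simply connected nilpotent Lie group $G$ of dimension $n$, and $X$ can be taken to be $G$ with a left-invariant Riemannian metric. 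The Dehn function is handled by the classical statement for $d=2$: finitely generated nilpotent groups have polynomially bounded Dehn functions (Gromov, Gersten, Hidber--Ri, \emph{etc.}). Alternatively, it follows from the $d=2$ homological case together with the combinatorial comparison for nilpotent groups. So the work is in the higher homological degrees.

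We induct on the nilpotency class $c$ of $G$ and argue via scaling maps (Gromov's hint). Fix a central extension $1\to Z\to G\to \bar G\to 1$, where $Z=\gamma_c G\cong\mathbf R^k$ and $\bar G$ has class $c-1$. By induction, $\bar G$ (a group with lattice $\Gamma/(\Gamma\cap Z)$) has polynomial filling functions in all degrees. Take a $(d-1)$-cycle $\sigma$ in $G$ of mass $v$; we may assume its diameter is at most polynomial in $v$ after first applying the standard decomposition of cycles into pieces of controlled diameter (a Federer--Fleming type cutting argument). Project $\sigma$ to $\bar G$ and fill the projection there by a chain $\bar\tau$ of polynomial mass. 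Next, lift $\bar\tau$ to $G$ using a Lipschitz-on-cells section over the $\Gamma$-equivariant cell structure; the section is defined on the lattice skeleton and extended cell by cell. The difference $\sigma-\partial(\mathrm{lift}\,\bar\tau)$ is then a cycle that projects to a degenerate chain, i.e.\ it lies in a bounded neighbourhood of a union of $Z$-cosets over the $(d-1)$-cells of $\bar\tau$. This decomposes it as a sum of ``vertical'' cycles, each living in $\pi^{-1}$ of a cell and having mass and diameter polynomial in $v$.

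The key estimate is then that a cycle living in a bounded neighbourhood of a coset $gZ$, of diameter $R$ in $G$, can be filled with mass polynomial in $R$ and its own mass. Along $Z$ the metric of $G$ is distorted: $d_G(e,z)\asymp |z|^{1/c}$. A set of $G$-diameter $R$ therefore meets $Z$ in a Euclidean region of size $R^c$. Fill the cycle inside the tube $N_1(gZ)\cong \text{cell}\times\mathbf R^k$, whose induced geometry is (quasi-)Euclidean at unit scale, using the Euclidean cone filling. This costs at most (size)$\times$(mass), i.e.\ $R^{c}\cdot$mass, which is polynomial. Summing over the polynomially many cells of $\bar\tau$ yields a polynomial bound. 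Cycles that are not a priori contained in a tube are handled by the same cone construction applied to the fibration over each cell.

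I expect the main obstacle to be controlling the cycle $\sigma-\partial(\mathrm{lift}\,\bar\tau)$. The lift of $\bar\tau$ has polynomial mass, but its boundary differs from $\sigma$ by a vertical correction whose $Z$-extent must be shown to be polynomially bounded. This is where the diameter reduction and the polynomial distortion of $Z$ are used. I would first try a dyadic scaling argument: use the homothety $\delta_t$ of the associated Carnot graded group, which scales $(d-1)$-mass by a bounded power of $t$, to reduce cycles of mass $v$ to cycles at scale $1$ with a polynomial loss. Non-graded $G$ would be compared to its associated Carnot group via the Pansu/Cornulier asymptotic-cone identification, and this comparison would control the error only up to polynomial factors, which suffices here.
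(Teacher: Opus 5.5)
\textbf{There is a genuine gap: the lift of $\bar\tau$ and the vertical remainder are never given polynomial mass bounds, and your suggested fix does not cover non-Carnot $G$.}

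Your route differs from the paper's. You induct on the class through $1\to Z\to G\to\bar G\to 1$, fill $\pi_*\sigma$ in $\bar G$, lift, and cone-fill the vertical remainder. The easy parts are sound. Because $Z$ is central and the cell structure is $\Gamma$-invariant, each tube $\pi^{-1}(\mathrm{cell})$ is uniformly bilipschitz to $\mathrm{cell}\times\mathbf R^k$. Because $Z$ is polynomially distorted, a vertical cycle of $G$-diameter $R$ there has a cone filling of mass $\lesssim R^c\,\operatorname{mass}$.

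The step everything rests on is only asserted. You need the lift of $\bar\tau$ to have mass polynomial in $v$, and $\sigma-\partial(\operatorname{lift}\bar\tau)$ to have polynomially bounded mass and $Z$-extent. A section chosen on lattice points and extended cell by cell gives no such control. A section cannot in general be uniformly Lipschitz: for the Heisenberg group it would bilipschitz-embed $\mathbf R^2$. Arbitrary lifts of adjacent lattice points can also differ by arbitrarily large central elements. What you need is a specific section, e.g.\ $s=\exp_G\circ\iota\circ\log_{\bar G}$ for a linear splitting $\iota$ of $\mathfrak g\to\bar{\mathfrak g}$, together with a bound on $\Lambda^d ds$ by a polynomial in the distance to the origin. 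That is exactly the paper's polynomial similarity lemma, applied to $G$ and $\bar G$: the transition matrix between the left-invariant and coordinate frames of $\Lambda^d$ is unipotent with polynomial entries. It has to be combined with the distortion bound $\Vert\log g\Vert\le C+L\,d_h(1,g)^s$. You call this ``the main obstacle'' and leave it open.

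Your proposed remedy does not close the gap. The dilation argument is available only when $G$ is Carnot-graded. The Pansu/Cornulier identification of the asymptotic cone compares distances up to sublinear error, not masses of $d$-chains; a $(1,o(r))$-quasi-isometry does not push Lipschitz chains forward with any mass bound. The non-Carnot case is precisely what the paper sets out to handle.

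Three smaller points:
\begin{enumerate}
\item A cycle supported over a union of cells is not a sum of cycles each supported over a single cell, since restrictions acquire boundary over the cell boundaries. You would need vertical corrections over lower skeleta. More simply, fill $\sigma-s_*\pi_*\sigma$ by the vertical homotopy $(x,t)\mapsto\sigma(x)\exp(t\zeta(x))$, where $s(\pi(\sigma(x)))=\sigma(x)\exp(\zeta(x))$. Its mass is polynomial once $s$ is polynomially controlled.
\item Your ``alternative'' derivation of the Dehn bound from the homological $d=2$ case is unjustified, since homological and homotopical Dehn functions can differ. Rely on Gersten--Holt--Riley, or on a direct coning argument, instead.
\item Once polynomial similarity and distortion are in hand, the induction is superfluous. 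The paper simply cones the cycle from $0$ in $(\mathfrak g,h_0)$, transfers mass back through $\exp$ at polynomial cost, and returns to cellular chains via Federer--Fleming.
\end{enumerate}
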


Theorem~\ref{prop:high-nil-fill} appears to be known to some authors: it is claimed to be true by Gromov (with a few hints towards a proof) and Varopoulos in \cite[p.72]{GromovAI} and \cite[p.59]{Varopoulos} respectively. We found no reference containing a fully detailed proof of the general case, although it is almost covered by the reunion of \cite[Corollary 7.2]{RileyHC} (from which one can deduce\footnote{A few details on that: Combining \cite[Corollary 7.2]{RileyHC} with \cite[Theorem 4]{YoungFillNil} and the fact that nilpotent groups have contractible asymptotic cones \cite{PanCBN} we find that the higher Dehn functions $\delta^{d-1}_{\Gamma}$ are polynomially bounded. For $d>3$, $\delta_\Gamma^{(d-1)}$ has the same growth type as $\mathrm{{FV}}_\Gamma^d$, see \cite[p.19206]{ABDYDehn} and the references given there. Finally $\operatorname{cFV}_\Gamma^d$ and $ \operatorname{FV}_\Gamma^d$ have the same growth rate by the Federer-Fleming theorem.} the result for all $d\geqslant 4$) and \cite{WengerAsRank} (which states it with an explicit upper bound on the polynomial order under the additional assumption that the Malcev completion of $\Gamma$ is Carnot-graded). For $d \in \{2, n\}$ and the Dehn function, optimal upper bounds are established, and these are even exact estimates when $d=n$ \cite{GHR, WengerAsRank, CSCIsop}. (When $\Gamma$ is the Heisenberg group, one has $n=3$, and the explicit estimates for $d=2,3$ were first obtained in \cite{MR1161694} and \cite{MR676380} respectively.)
Young, and later Gruber, proved that some nilpotent groups have Euclidean filling (that is, the same as $\mathbf Z^n$, and in particular, polynomial) in degrees $d$ below some threshold \cite{YoungFillNil,MR3472846,MR4002271,MR3982578}; for these groups their works also provide precise polynomial estimates of the filling invariants in the remaining degrees. Although always polynomially bounded, the Dehn function of nilpotent groups is not always exactly polynomial \cite{MR2783380}; this ``pathology'' has not been found in higher degrees so far.

%\footnote{For real, complex and quaternionic Heisenberg groups, an interesting phenomenon appears where the exponents of filling in degrees over the aforementionned threshold appear to be again those of euclidean spaces, but for one dimension higher.}.

Our motivation for proving Theorem~\ref{prop:high-nil-fill} is twofold. First, we observe that none of the references cited above provides a complete proof when $\Gamma$ is a lattice in a non-Carnot group and $d=3$. Second, polynomial upper bounds on the filling invariants recently appeared as important assumptions in works of L\'opez Neumann and Paucar \cite{neumann2026quantitativepolynomialcohomologyapplications} on the polynomial cohomology of nilpotent groups (see also \cite{bader2026higherkazhdanpropertyunitary}).
In the proof below, in part inspired by Gromov's sketches, we treat the general case but we do not attempt to optimize the exponent of polynomiality of the upper bound on the filling function.

\subsection{Acknowledgement}
I thank R. Young for pointing out references to me, J. Paucar and A. L\'opez Neumann for useful discussions.

\section{Proof of Theorem~\ref{prop:high-nil-fill}}

Our proof has two parts; in the first one we make precise the ``polynomial similarity'' between nilpotent Lie groups and their Lie algebras evoked by Gromov \cite[5.A\textsubscript{2}]{GromovAI} and use it to treat the case of the Dehn function along the lines of the sketch given in \cite[pp.56-57]{GromovAI}. The bound obtained in this way is already not optimal, we include it as a training for the general case, that we address in the second part\footnote{In \cite{GromovAI} on page 72, Gromov provides a hint on how one could proceed in the general case building on \cite[5.A\textsubscript{5}]{GromovAI} but we stress that the proof we give here does not quite follow this path at this stage; one  essential difference is that we use the Euclidean dilation in the Lie algebra instead of what Gromov calls the ``fake dilation'' of the Lie group.}. For the general case we additionally use the Federer-Fleming deformation theorem, which is the only non-elementary ingredient we require overall.

Let $G$ be the real Malcev completion of a finite-index torsion-free subgroup $\Gamma_0$ of $\Gamma$. $G$ has dimension $n$ as a real Lie group. Let $s$ be the nilpotency class of $\Gamma_0$, hence of $G$. Equip the Lie algebra $\mathfrak g = \operatorname{Lie}(G)$ with a system of linear coordinates $(x_1,\ldots, x_n)$ dual to a basis $(e_1,\ldots, e_n)$ such that for any $1\leqslant i < j \leqslant n$, $[e_i,e_j] \in \operatorname{span}\{ e_k \colon k > j \}$; it is always possible to find such a basis by picking one such that the sequence of subspaces $V_j = \{ e_k: k >j \}$ refines the lower central series filtration. Let $h$ be the left-invariant Riemannian metric on $G$ which coincides with the flat euclidean metric $dx_1^2 + \cdots + dx_n^2$ at $T_1G \simeq \mathfrak g$; call the latter $h_0$.
Our first goal is to compare the Riemannian metric $\exp^\star h$, the Euclidean metric $h_0$ and the inner products they induce in the $d$-fold exterior powers $\Lambda^d T \mathfrak g$ for $d \in \{1, \ldots,  n\}$ (See e.g. \cite[1.7.5]{FedGMT} for the construction of these inner products).
We will actually need bounds in both directions.

\begin{lem}[Polynomial similarity]
\label{lem:bounding-simple-vectors}
With notation as above, there exists $R_d\in \mathbf R[t]$, nondecreasing on $[0,+\infty)$ and such that for any $d$-vector field $V = V_1 \wedge \cdots \wedge V_d$ on $\mathbf R^d$ we have, for all $x = (x_1,\ldots, x_n) \in \mathbf R^n \simeq \mathfrak g$,
\begin{equation}
\label{eq:bounding-simple-vectors}
\frac{\Vert V_{x} \Vert_{\Lambda^dh_0}}{R_d (\Vert x \Vert)}
    \leqslant \Vert  V_{x} \Vert_{\Lambda^d \exp^\star h} \leqslant R_d (\Vert x \Vert) \Vert  V_{x} \Vert_{\Lambda^d h_0}.
\end{equation}    
\end{lem}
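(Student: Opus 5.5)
We compare $\exp^\star h$ and $h_0$ at a point $x\in\mathfrak g$ through the differential of the exponential map, written in left-invariant form. By definition of $h$, for a tangent vector $w\in T_x\mathfrak g\simeq\mathfrak g$ we have
\[
\Vert w\Vert_{\exp^\star h}=\Vert (dL_{\exp x})^{-1} d\exp_x(w)\Vert_{h_0}=\Vert A(x)w\Vert_{h_0},
\]
where
\[
A(x)=\frac{1-e^{-\operatorname{ad}_x}}{\operatorname{ad}_x}=\sum_{k=0}^{s-1}\frac{(-1)^k}{(k+1)!}(\operatorname{ad}_x)^k .
\]
This is the classical formula for the left-trivialized differential of $\exp$. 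Since $\mathfrak g$ is nilpotent of class $s$, the series terminates, so the entries of $A(x)$ are polynomials in $x$ of degree at most $s-1$.

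The key structural point is that in the chosen basis $\operatorname{ad}_x$ is strictly lower triangular. Indeed, $[e_i,e_j]\in\operatorname{span}\{e_k: k>j\}$, so $\operatorname{ad}_x e_j\in\operatorname{span}\{e_k:k>j\}$. Hence $A(x)$ is unipotent lower triangular, and $\det A(x)=1$. Its inverse is
\[
A(x)^{-1}=\frac{\operatorname{ad}_x}{1-e^{-\operatorname{ad}_x}}=\sum_{k=0}^{s-1} b_k(\operatorname{ad}_x)^k,
\]
where the $b_k$ are the Bernoulli-type Taylor coefficients, and the sum is again finite by nilpotency. So $A(x)^{-1}$ also has polynomial entries. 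Bounding operator norms by entry sizes gives polynomials $P,Q$, nondecreasing on $[0,\infty)$, with $\Vert A(x)\Vert\le P(\Vert x\Vert)$ and $\Vert A(x)^{-1}\Vert\le Q(\Vert x\Vert)$. For nondecreasing, take sums of absolute values of coefficients times powers of $\Vert x\Vert$.

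Next we pass to $d$-vectors. The inner product $\Lambda^d\exp^\star h$ at $x$ is the pullback of $\Lambda^d h_0$ under $\Lambda^d A(x)$. Therefore
\[
\Vert V_x\Vert_{\Lambda^d\exp^\star h}=\Vert \Lambda^dA(x)\,V_x\Vert_{\Lambda^d h_0}.
\]
Here $V_1\wedge\cdots\wedge V_d$ is a $d$-vector field on $\mathbf R^n$; the statement's "$\mathbf R^d$" should read $\mathbf R^n$. For any linear map $B$, the operator norm of $\Lambda^dB$ on $\Lambda^d$ with the induced inner product is at most $\Vert B\Vert^d$. It equals the product of the top $d$ singular values, and that formula does not need simplicity of $V$. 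Hence
\[
\Vert V_x\Vert_{\Lambda^d\exp^\star h}\le P(\Vert x\Vert)^d\Vert V_x\Vert_{\Lambda^d h_0},
\]
and applying the same bound to $\Lambda^d A(x)^{-1}$ gives
\[
\Vert V_x\Vert_{\Lambda^d h_0}\le Q(\Vert x\Vert)^d\Vert V_x\Vert_{\Lambda^d\exp^\star h}.
\]
Setting $R_d=\max$-type majorant, for instance $R_d=(P+Q)^d$, which is nondecreasing on $[0,\infty)$, gives \eqref{eq:bounding-simple-vectors}. An alternative for the inverse bound uses $\det A=1$: the smallest singular value of $A(x)$ is at least $\Vert A(x)\Vert^{-(n-1)}$, so $P$ alone suffices up to raising its power.

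The only step needing care is justifying the formula for $d\exp$ with left translation and checking that it matches the convention defining $h$ as left-invariant. If one wants to avoid Lie theory, the same conclusion follows from the Baker–Campbell–Hausdorff formula. That formula is a polynomial in nilpotent groups, so $\log(\exp(-x)\exp(x+tw))$ is polynomial in $(x,t)$, and differentiating at $t=0$ gives $A(x)w$ with $A$ polynomial and unipotent triangular. The triangularity argument is the main point that makes the inverse also polynomial; everything else is routine multilinear algebra.
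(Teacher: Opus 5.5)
Your proof is correct and follows essentially the same route as the paper. Both arguments rest on the same fact: the left-trivialized differential of $\exp$ is a unipotent triangular matrix with polynomial entries whose inverse has the same form. The paper obtains this from the pullback of the left-invariant frame via the finite Baker--Campbell--Hausdorff series, whereas you write it explicitly as $A(x)$. The only cosmetic difference is in passing to $d$-vectors: you use $\Vert \Lambda^d B \Vert \leqslant \Vert B \Vert^d$, while the paper notes that the exterior power of the transition matrix is again unipotent triangular in the lexicographic order on multi-indices.
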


\begin{proof}

The Baker-Campbell-Hausdorff series for $G$ only has finitely many non-vanishing terms because $G$ is nilpotent, and so the left-invariant vector fields $X_1,\ldots, X_n$ which coincide with $e_1, \ldots,e_n$ at the origin are pulled-back to the Lie algebra via the exponential as
\begin{equation}
\label{eq:bounding-simple-vectors-1}
     (\exp_G^\star X_i)(x_1,\ldots, x_n) = e_i + \sum_{j>i} P_{i,j}(x_1,\ldots, x_n) e_j 
\end{equation}
where $P_{i,j} \in \mathbf R[x_1,\ldots, x_n]$ are polynomials of total degrees at most $s -1$ (see e.g. \cite{LeDonneTripaldiCCLow} for many concrete examples of such calculations). 
Fix now an integer $d \in \{1,\ldots, n\}$. We will generalize the previous considerations to the $d$-areas. Denote by $I = \{i_1, \ldots, i_d \}$ a multi-index with $1 \leqslant i_1 < \cdots < i_d \leqslant n$ and equip the set of multiindices with the lexicographic order.  An orthonormal frame is given by the simple left-invariant $d$-vectors $X_I = X_{i_1} \wedge \cdots \wedge X_{i_d}$ and
\begin{equation}
     \operatorname{exp}^\star X_I = e_I + \sum_{J > I} P^{[d]}_{I,J}(x_1,\ldots, x_n) e_J, \notag
\end{equation}
where $P_{I,J}^{[d]}$ are again polynomials.
 This equation expresses that the transition matrix between two orthogonal frames of $\Lambda^d h_0$ and 
$\Lambda^d \exp^\star h$ is unipotent with its non-diagonal entries being polynomials in $(x_1, \ldots, x_n)$. Inverting this matrix, we check that its inverse has the same properties.
The bounds \eqref{eq:bounding-simple-vectors} follow from this observation for some (maybe not nondecreasing) polynomial $\widetilde R_d$ instead of $R_d$; we may finally replace $\widetilde R_d$ by a larger polynomial $R_d$ which is nondecreasing on $[0,+\infty)$ in order to reach the conclusion of the lemma.
\end{proof}

In addition to these estimates, we will need a classical lemma on uniform polynomial distortion in nilpotent groups that can be derived from the work of Guivarc'h or from that of Osin \cite{GuivNil,OsinNil}.

\begin{lem}[Polynomial distortion]
\label{lem:distortion}
    With notation as above, there exists $C\geqslant0$ and $L \geqslant 1$ only depending on $G$ and $h$ such that for every $g \in G$, $d_h(1,g)/L - C \leqslant \Vert \log g \Vert \leqslant C+ L d_h(1,g)^s$.
\end{lem}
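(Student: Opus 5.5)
Both inequalities rest on the fact that in exponential coordinates the group law is polynomial and respects the triangular structure of the basis $(e_1,\ldots,e_n)$. I treat the two bounds separately.

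\emph{Lower bound $d_h(1,g)/L - C \leqslant \Vert \log g\Vert$.} I will show $d_h(1,g)$ is at most linear in $\Vert \log g\Vert$. Write $x=\log g$ and join $1$ to $g$ by the one-parameter path $t\mapsto \exp(tx)$, $t\in[0,1]$. Its velocity at time $t$ is the left translate of $x\in\mathfrak g = T_1G$: indeed $\frac{d}{dt}\exp(tx) = (dL_{\exp(tx)})_1 x$ because one-parameter subgroups are integral curves of left-invariant fields. Since $h$ is left-invariant and agrees with $h_0$ at $1$, each velocity has $h$-norm $\Vert x\Vert$. So $d_h(1,g)\leqslant \Vert x\Vert$, which gives the bound with $L=1$ and $C=0$.

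\emph{Upper bound $\Vert \log g\Vert \leqslant C + L\,d_h(1,g)^s$.} Let $r=d_h(1,g)$. By compactness of the unit ball $B$ and left-invariance, $g$ is a product $g=g_1\cdots g_m$ with $m\leqslant r+1$ and each $g_k$ in $B$. The set $\log B$ is bounded, say by a constant $K$. I then track the coordinates of partial products by induction along the triangular structure.

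By the Baker--Campbell--Hausdorff formula, the coordinate $x_j$ of a product $\exp(y)\exp(z)$ equals $y_j + z_j + Q_j(y_{<j},z_{<j})$. Here $Q_j$ is a polynomial built from brackets, and it involves only coordinates with smaller index, because $[e_i,e_k]\in\operatorname{span}\{e_l : l>\max(i,k)\}$.

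To get the exponent $s$ rather than something larger, grade by weight. Give $e_i$ weight $w_i$, the largest $w$ such that $e_i\in\gamma_w\mathfrak g$ in the lower central series; the basis refines this filtration, so this is well defined. Each monomial in $Q_j$ then has total weight at most $w_j$, since a bracket of elements of $\gamma_a$ and $\gamma_b$ lies in $\gamma_{a+b}$.

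The induction hypothesis is that after $k$ factors, $\vert x_j\vert \leqslant C_j\,k^{w_j}$. Adding one factor of norm at most $K$ changes $x_j$ by at most a polynomial in the previous coordinates of weighted degree at most $w_j$. Done carefully, the cumulative sum over $k\leqslant m$ gives $m^{w_j}$, the standard Guivarc'h--Osin estimate, and $w_j\leqslant s$.

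This cumulative summation is the main obstacle. A naive induction loses a factor of $m$ per step and yields an exponent like $m^{w_j+1}$, which is not enough. I would fix it by summing the increments instead of iterating the bound. A term of weighted degree $<w_j$ contributes $\sum_k k^{w_j-1}\lesssim m^{w_j}$. A term of weighted degree exactly $w_j$ must pair the new factor's low-weight coordinates with old ones of weight $w_j - 1$, and so it also contributes $\sum_k k^{w_j-1}$.

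Alternatively, one can simply cite Guivarc'h and Osin: the word-length distortion of $\gamma_w$ elements is polynomial of degree $w$. Combining this with $\Vert x\Vert\lesssim\max_j \vert x_j\vert\lesssim (r+1)^s$ gives the upper bound after adjusting $C$ and $L$.
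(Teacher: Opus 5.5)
Your proof is correct, but it is organized differently from the paper's. The paper gets both inequalities from one citation. Guivarc'h's inclusions $B_G(1,\rho)^n\subseteq\exp_G B_{kn}\subseteq B_G(1,1)^n$, for the homogeneous quasi-norm $\varphi(x)=\sum_i\Vert x_i\Vert_i^{1/i}$ of a graded decomposition $\mathfrak g=\oplus_i W_i$, combined with Svarc--Milnor, give $\varphi(\log g)\asymp d_h(1,g)$ up to additive constants. The lemma then follows from the elementary comparisons $\varphi(x)\leqslant C_1+L_1\Vert x\Vert$ and $\Vert x\Vert\lesssim 1+\varphi(x)^s$.

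You treat the two directions separately and without that citation:
\begin{itemize}
\item For the left inequality, the path $t\mapsto\exp(tx)$ has constant $h$-speed $\Vert x\Vert$ by left-invariance. This gives $d_h(1,g)\leqslant\Vert\log g\Vert$ outright, with $L=1$ and $C=0$, which is simpler and sharper than going through $\varphi$.
\item For the right inequality, your BCH induction along the weights is in effect a self-contained proof of exactly the half of Guivarc'h's theorem that is needed. That half is the coordinatewise ``box'' bound $\vert x_j\vert\lesssim m^{w_j}$ for products of $m$ elements of a fixed ball.
\end{itemize}
The paper's route is shorter and yields the stronger two-sided comparison between $d_h$ and $\varphi\circ\log$. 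Yours uses only the finiteness of BCH and the fact that the chosen basis refines the lower central series.

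When you write up the summation step, run the induction on $j$ with $k$ free, since $Q_j$ only involves coordinates of index $<j$. Assume $\vert x^{(k)}_i\vert\leqslant C_ik^{w_i}$ for all $i<j$ and all $k$. Every BCH term beyond $y+z$ is an iterated bracket involving both arguments, so each monomial of $Q_j(x^{(k)}_{<j},z_{<j})$ contains a coordinate of the new factor $z$. That coordinate is bounded by $K$ and has weight at least $1$, so the old coordinates in the monomial have total weight at most $w_j-1$. Hence the increment is $O(k^{w_j-1})$, and summing over $k<m$ gives $O(m^{w_j})$. This is exactly the fix you describe, stated so that no exponent is lost.

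Two smaller points:
\begin{itemize}
\item The factorization $g=g_1\cdots g_m$ with $m\leqslant r+1$ comes from subdividing a minimizing geodesic, which exists because $h$ is complete. Compactness is only used to bound $\log B$.
\item Your closing alternative via distortion of the subgroups $\exp(\gamma_w\mathfrak g)$ does not by itself give coordinatewise bounds for an arbitrary $g$. Your main argument does not need it.
\end{itemize}
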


\begin{proof}
    Guivarc'h \cite[p.344]{GuivNil} proves that there exists a direct sum decomposition $\mathfrak g  = \oplus_{i=1}^s W_i$ and a suitable choice of  norms $\Vert \cdot \Vert_i$ on $W_i$, such that if one denotes $\varphi(x) = \sum_{i=1}^s \Vert x_i \Vert_i^{1/i}$ and then $B_r = \{ x \in \mathfrak g : \varphi(x) \leqslant r \}$ for all $r >0$, we have, for some $\rho, k >0$ and for all $n \in \mathbf N$,
    $B_G(1,\rho)^n \subseteq \exp_G B_{kn} \subseteq B_G(1_G,1)^n $. By the Svarc-Milnor lemma the word distances over the generating sets $B_G(1_G, 1)$ and $B_G(1,\rho)$ are both quasiisometric to the Riemannian distance $d_h$, and so we have $C_0$, $L_0$ such that $d_h(1,g)/L_0 - C_0 \leqslant \varphi(\log g) \leqslant L_0 d_h(1,g) + C_0$.
    On the other hand, we have $r^{1/i} \leqslant 1 + r$ for all $r$ and so
    $\varphi(\log g) \leqslant C_1 + L_1 \Vert \log g \Vert$ for some $C_1 > 0$ and $L_1 \geqslant 1$. This proves the left inequality of the lemma. For the inequality on the right, note that there exists $C_2, L_2$ such that $\varphi(\log g)^{s} \geqslant L_2 \Vert \log g \Vert- C_2$ and so $(L_0 d_h(1,g) + C_0)^s \geqslant L_2 \Vert \log g \Vert - C_2$; after adjusting $L$ and $C$ suitably, this proves the inequality on the right.
\end{proof}

We now finish the proof of the Dehn function part of the statement. We work with the following definition of the Dehn function: 
\[ \delta_G(v) = \sup_{\gamma \in \operatorname{Lip}(S^1,G), \,\operatorname{length}_h(\gamma) \leqslant r} \; \inf_{\Delta \in \operatorname{Lip}(B^2, G), \Delta_{\mid S^1} = \gamma} \operatorname{Area}(\Delta).  \]
We refer to \cite[2.C.1]{CTD} and \cite[Section 5]{MR1967746} for the equivalence of $\delta_G$ (when it grows at least linearly) with the definition of the Dehn function involving group presentations. 

Let $\gamma$ be a lipschitz loop with length $v$ in $G$. 
Left-translate $\gamma$ so that it goes through $1_G$; this does not change its length. 
Then consider $\widehat \gamma := \log_G \circ \gamma$, and set $\widehat v = \operatorname{length}(\widehat \gamma)$; note that we have $\widehat \gamma \subseteq B(0,R)$, where $R \leqslant C + v^s$ by Lemma~\ref{lem:distortion}, and so $\widehat v \leqslant R_1(C+v^s)$, applying\footnote{The case $d=1$ of Lemma~\ref{lem:bounding-simple-vectors} can be deduced directly from \eqref{eq:bounding-simple-vectors-1} and inverting a triangular matrix with polynomial off-diagonal entries so the full strength of the lemma is in fact not needed here.} Lemma~\ref{lem:bounding-simple-vectors} with $d=1$. Build a lipschitz disk $\widehat \Delta$ with the union of all the geodesic segments from $0$ to $\widehat \gamma(t)$ for $0 \leqslant t \leqslant \operatorname{length(\widehat \gamma)}$. By the Cauchy-Schwarz inequality
\[ \operatorname{Area}(\widehat \Delta) = \int_0^{\widehat v} \frac{1}{2}  \vert\langle \widehat \gamma(t), \widehat \gamma'(t) \rangle \vert dt \leqslant \widehat v^2. \]
Consider the lipschitz disk $\Delta = \exp(\widehat \Delta)$.
Then by \eqref{eq:bounding-simple-vectors} we have $\operatorname{Area}(\Delta) \leqslant R_2(\widehat v) \operatorname{Area}(\widehat \Delta) \leqslant R_2(\widehat v)\widehat v^2$, which is a polynomial in $\widehat v$. Thus the filling function of loops in
$G$ (hence the Dehn function of $\Gamma$) is bounded by a polynomial function of $\widehat v$. Altogether,
\begin{equation*}
    \operatorname{Area}(\Delta) \leqslant R_2(C+v^s)R_1(C+v^s)^2 = O(v^N)
\end{equation*}
where $N = s \left( 2 \deg  R_1 +\deg R_2 \right)$ (this is not the optimal $N = s+1$ of \cite{GHR}).

We now come to the homological filling functions.
Consider the manifold $\mathcal N = G/\Gamma_0$. This is a compact nilmanifold, in particular, it is the total space of an iterated tori bundle. Hence, as can be shown by induction, $\mathcal N$ is a PL manifold, and especially, it is triangulable. Let $X$ denote the finite simplicial complex of dimension $n$ underlying a finite triangulation of $\mathcal N$, and let $\widetilde X$ be its universal cover.
Endow the $1$-skeleton of $\widetilde X$ with the simplicial distance.
We have two singular triangulations, that we will denote $\nu$ and $\tau$, of $G$ and $\mathbf R^n$ respectively, by $\widetilde X$, $\nu$ being obtained by lifting the triangulation of $\mathcal N$ by $X$ to universal covers, and $\tau$ by composing with $\log_G$.
The triangulation $\nu$ is uniform, in the sense that
\begin{enumerate}[(i)]
    \item the associated map $\nu:\widetilde X^{(0)} \to G$ is a $\Gamma_0$-equivariant quasiisometry, and
    \item there exists a constant $L' \geqslant 1$ such that for every simplex $\sigma$ of $\widetilde X$, we have that $\operatorname{mass} \nu(\sigma) \in [1/L', L']$, where the mass is measured with respect to $h$.
\end{enumerate}

Up to $G$-translating $\nu$ slightly, we will also assume that $1_G$ supports a $0$-simplex of $\nu$ and equivalently, that $0_{\mathbf R^n}$ supports a $0$-simplex of $\tau$. With these adjustments, we have that
\begin{itemize}
    \item[(i')] the map $\nu:\widetilde X^{(0)} \to G$ is a $L'$-bilipschitz quasiisometry.
\end{itemize}

The triangulation $\tau$ does not have the properties (i) and (ii) above, nevertheless, given a $d$-simplex $\sigma$ of $\widetilde X$ and letting $r$ being the smallest number such that $\tau(\sigma) \subseteq  B(0,r)$, we have that $\operatorname{mass}_{h_0} \tau(\sigma) \leqslant L'R_d(r)$ by applying Lemma~\ref{lem:bounding-simple-vectors}.

We will need the following lemma.

\begin{lem}[Euclidean dilation]
\label{lem:coning}
    Let $k \in \{1,\ldots, n-1\}$ and let $\mathbf c = \sum_{i=1}^m a_i \sigma_i$ be a lipschitz $k$-cycle in $\mathbf R^n$. Assume that $0 \in \operatorname{supp} \mathbf c$.
    Then there exists an integral lipschitz $(k+1)$-chain $\mathbf C$ such that $\operatorname{mass}(\mathbf C) \leq \operatorname{mass}(\mathbf c)\operatorname{diam}(\operatorname{supp} \mathbf c)$, $\partial \mathbf C = \mathbf c$ and $\operatorname{supp}(\mathbf C) \subseteq B(0, \operatorname{diam}(\operatorname{supp}(\mathbf c)))$.
\end{lem}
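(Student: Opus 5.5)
The plan is to take for $\mathbf C$ the straight-line cone over $\mathbf c$ with vertex $0$, which is the Euclidean dilation of the lemma's title. For $t\in[0,1]$ let $\delta_t(x)=tx$ be the homothety of $\mathbf R^n$, and let $H:[0,1]\times\mathbf R^n\to\mathbf R^n$ be $H(t,x)=tx$. For each lipschitz simplex $\sigma_i:\Delta^k\to\mathbf R^n$, the map $(t,y)\mapsto t\sigma_i(y)$ on $[0,1]\times\Delta^k$ is lipschitz. After subdividing the prism $[0,1]\times\Delta^k$ into $(k+1)$-simplices in the standard way, it gives an integral lipschitz $(k+1)$-chain $0\ast\sigma_i$. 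We then set $\mathbf C=\sum_i a_i\,(0\ast\sigma_i)$. The prism (homotopy) formula gives
\[ \partial(0\ast\sigma)=\sigma-\delta_0{}_\sharp\sigma-0\ast\partial\sigma ,\]
with the appropriate signs. Summing over $i$, the terms $0\ast\partial\sigma_i$ cancel because $\partial\mathbf c=0$. The term $\delta_0{}_\sharp\mathbf c$ is a chain supported at the single point $0$, so it has zero mass when $k\geqslant1$ and is degenerate. It vanishes as a current, or can be discarded by working modulo degenerate chains. Hence $\partial\mathbf C=\mathbf c$.

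For the support, every point of $\operatorname{supp}\mathbf C$ has the form $tx$ with $x\in\operatorname{supp}\mathbf c$ and $t\in[0,1]$. Since $0\in\operatorname{supp}\mathbf c$, we get $\Vert x\Vert=\Vert x-0\Vert\leqslant\operatorname{diam}(\operatorname{supp}\mathbf c)$, and therefore $\Vert tx\Vert\leqslant\operatorname{diam}(\operatorname{supp}\mathbf c)$. This is exactly where the hypothesis $0\in\operatorname{supp}\mathbf c$ is used.

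For the mass, I will use the cone inequality (compare Federer, 4.1.11): $\operatorname{mass}(0\ast T)\leqslant\frac{r}{k+1}\operatorname{mass}(T)$ whenever $\operatorname{supp}T\subseteq B(0,r)$. To prove it for a single simplex, parametrize by $(t,y)\mapsto t\sigma(y)$. The Jacobian $(k+1)$-vector is $\sigma(y)\wedge t^k\,\partial_{y_1}\sigma\wedge\cdots\wedge\partial_{y_k}\sigma$. Its norm is at most $t^k\Vert\sigma(y)\Vert\,\Vert J_k\sigma(y)\Vert$, by Hadamard's inequality applied to simple vectors. Integrating in $t$ gives the factor $1/(k+1)$, and $\Vert\sigma(y)\Vert\leqslant r$. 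Summing with coefficients yields $\operatorname{mass}\mathbf C\leqslant\sum_i|a_i|\operatorname{mass}(0\ast\sigma_i)\leqslant r\operatorname{mass}(\mathbf c)$ with $r=\operatorname{diam}(\operatorname{supp}\mathbf c)$. This is the claimed bound, with an extra factor $1/(k+1)$ to spare.

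The main obstacle is bookkeeping rather than an estimate. One has to make sure the mass of $\mathbf c$ is understood as the mass of the associated current, or as $\sum|a_i|\operatorname{mass}\sigma_i$ when the simplices do not overlap. Otherwise the triangle inequality only gives the bound in terms of $\sum_i|a_i|\operatorname{mass}(\sigma_i)$. I would first state the lemma with that $\ell^1$-type mass, which is what is used downstream for chains coming from $\tau$. If current mass is needed, the cleanest route is to pass to integral currents and use the cone construction $0\times\!\!\!\!\times T=H_\sharp([0,1]\times T)$ directly. For such currents the boundary formula and the mass inequality are standard.
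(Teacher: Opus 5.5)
Your proposal is correct and is essentially the paper's argument. The paper also takes the straight-line cone $\Sigma_i(t\bar x)=t\sigma_i(\bar x)$ over each $\sigma_i$ with vertex $0$, gets $\partial\mathbf C=\mathbf c$ from $\partial\mathbf c=0$, gets the support bound from $0\in\operatorname{supp}\mathbf c$ and convexity, and gets the mass bound from the area formula, implicitly reading $\operatorname{mass}(\mathbf c)$ as $\sum_i|a_i|\operatorname{mass}(\sigma_i)$ just as you do. The differences are cosmetic: the paper parametrizes each cone directly by a $(k+1)$-simplex with apex over the origin, so no degenerate term $\delta_{0\sharp}\mathbf c$ appears, whereas your prism parametrization and Jacobian computation give the slightly sharper constant $r/(k+1)$.
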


\begin{proof}
We prove the lemma by a rudimentary version of the ``coning'' technique which is similar to the argument we used for the Dehn function; we mention that this argument has been employed many times in more elaborate ways in the literature, and notably in \cite{MR2153909}.
    Identify $\Delta^{k-1}$, resp. $\Delta^k$ with the simplices with vertices at $(e_1, \ldots, e_k)$, resp. at $(0,e_1, \ldots, e_k)$ in $\mathbf R^k$.
    For $i \in \{1,\ldots, m\}$ define $\Sigma_i(x) = t \sigma_i(\overline x)$ whenever $x \in \Delta^{k+1}$ decomposes as $t \overline x$ with $\overline x \in \Delta^k$ (this decomposition is unique unless $x=0$, in which case the definition of $\Sigma_i(x)$ is unambiguous).
    Set $\mathbf C = \sum_{i=1}^m a_i \Sigma_i$. Then $\mathbf C$ has its support in $B(0,\operatorname{diam}(\operatorname{supp} \mathbf c))$ by convexity of the latter ball.
    A computation using $\partial \mathbf c = 0$ gives that $\partial \mathbf C = \mathbf c$, and it remains to bound the mass of $\mathbf C$.
    For this, note that by the area formula, for all $i$ from $1$ to $m$ and denoting by $\lambda_j$ the Lebesgue measure on $\Delta^j$ we have
    \begin{align*}
        \operatorname{mass}(\Sigma_i) & = \int_{\Delta^{k+1}} \Vert \Lambda^{k+1} d \Sigma_i(x) \Vert d\lambda_{k+1}(x) \\
        & \leq \int_0^{\mathrm{diam}(\mathrm{supp}(\mathbf c))} \frac{t}{\operatorname{diam}(\operatorname{supp} \mathbf c)} \int_{\Delta^{k}} \Vert \Lambda^{k} d \sigma_i(x) \Vert d\lambda_k(x) dt \\
        & \leq \operatorname{mass}(\mathbf \sigma_i)\operatorname{diam}(\operatorname{supp} \mathbf c).
    \end{align*}
    We get the inequality required in the conclusion of the lemma by summing over all $i$ for $i$ from $1$ to $m$.
\end{proof}

Let $v \geqslant 0$.
Let $c$ be a lipschitz singular $d$-cycle in $\widetilde X$ of mass at most $v$; by mass in $\widetilde X$ we mean the total number of simplices (of all dimensions) in the support. 
We call $c_\nu$ and $c_{\tau}$ the lipschitz cycles in $G$ and $\mathbf R^n$ respectively which correspond to $\nu$ and $\tau$ in the triangulations.

Start assuming that $c$ (and thus also $c_\nu$ and $c_\tau$) is connected.
Left translating $c$ by some $\gamma_0 \in \Gamma_0$ (which does not affect the $\widetilde X$-mass), also assume that $1_G \in \operatorname{supp}(c_\nu)$ and $0_{\mathbf R^n} \in \operatorname{supp}(c_\tau)$.
Since $c$ is connected and $\nu^{(0)}: \widetilde X^{(0)} \to G$ is a bilipschitz quasiisometry by (i'), $\operatorname{diam}_h(\operatorname{supp}(c_\nu))$ is bounded by a constant $L'$ times $v$, and then by the distortion Lemma~\ref{lem:distortion}, $\operatorname{supp}(c_\tau)$ is contained in a ball $B(0,R)$ with $R \leqslant C_3+ L_3v^s$ for some constants $C_3 \geqslant 0$ and $L_3 \geqslant 1$.

We now apply Lemma~\ref{lem:coning} to $c_\tau$, 
which provides a $(d+1)$-chain $C$ in $\mathbf R^n$ (beware that it may not be supported on $\tau$), that we push back to $G$ by the exponential map; we claim that the resulting lipschitz singular integral chain $C_G = \exp_{\star} C$ has
\begin{equation}
\label{eq:bounding-mass-cg}
     \operatorname{mass}_h(C_G) \leq KR_{d+1}(C_3+L_3v^s)R_d(C_3+L_3v^s)^2.
\end{equation}
for some constant $K$.
Indeed, since $0 \in \operatorname{supp}(\partial C)$, for any simplex $\Sigma$ of $C$ we have that $\operatorname{supp} \Sigma \subseteq B(0, \operatorname{diam} \operatorname{supp} C) \subseteq B(0,K_0v)$ for some constant $K_0 \geqslant 1$ (where we apply Lemma~\ref{lem:coning} along with the fact that $c$ is connected), and then
$
\operatorname{mass}(\exp_\star \Sigma) \leqslant K_0^2 R_{d+1}(C_3+L_3v^s) \operatorname{mass}(\Sigma)$.
Summing this over all $\Sigma$ and letting $K = 1+K_0^2$ we get the required bound \eqref{eq:bounding-mass-cg} .

Since $\nu$ is a uniform triangulation of $G$, by the Federer-Fleming theorem (See \cite[Theorem 2]{YoungFillNil} for a suitable formulation) we can deform $C_G$ into a $(d+1)$-chain $C_\nu$ with $C_\nu = C_G + \partial R$, $R$ is a $(d+2)$-chain of mass $\lesssim R_d(v) v$, and 
\begin{equation*}
    \operatorname{mass}(C_\nu) \lesssim \operatorname{mass}(C_G).
\end{equation*}
Using again that $\nu$ is uniform, we have that $C_\nu$ comes from a chain on $\widetilde X$ with mass bounded by a polynomial in $v$; precisely 
$C_\nu = \sum_{\sigma \in \widetilde X} a_\sigma \nu \circ \sigma$, where 

$$\sum_\sigma \vert a_\sigma \vert \leqslant \sum_{\sigma : \dim \sigma = d+1} \vert a_ \sigma \vert \leqslant L \operatorname{mass}_h C_\nu \lesssim R_{d+1} (v^s) R_d(v^s)^2.$$

Finally, $c$ might not be connected. 
Assume that its connected components have masses $\lambda_1 \operatorname{mass}(c), \ldots, \lambda_p \operatorname{mass}(c)$ where $p$ is the number of connected components and $\lambda_1+\cdots + \lambda_p = 1$.
There exist constants $K_1 >0$ and $M> 1$ such that for $1\leqslant j \leqslant p$, the $j$-th connected component of $c$ can be filled with a chain of mass at most $K_1 + (\lambda_j \operatorname{mass}(c))^M$.
Summing the chains that fill every connected component, we find a chain $C$ filling $c$ which has mass
\begin{align*}
    \operatorname{mass}(C) & \leq K_1p + \sum_{j=1}^p (\lambda_j \operatorname{mass}(c))^M  = K_1p + \left( \sum_j \lambda_j^M \right) \operatorname{mass}(c)^M\leq K_1v + v^M
\end{align*}
where we used the bound $p \leqslant v$ because every connected component has at least one simplex in it. This finishes the proof.
%\end{proof}

\section{Final remarks}

The part of the proof above showing that the Dehn function of a simply connected nilpotent Lie group $G$ is polynomially bounded does not require $G$ to have any lattice. This result is not new: it follows, for example, from the fact that the asymptotic cones is simply connected, and a result of Gromov; the details of the proof were given by Dru\c{t}u \cite[Theorem  4.1]{MR1902363}.
In the part concerning the homological filling functions, the presence of a lattice is required, nevertheless, one can note that $\operatorname{FV}^2_G$ is bounded above by the super-additive closure of $\delta_G$, the argument being similar to the way we went on from filling connected to non-connected chains, see \cite[Remark 12.C.3]{CTD} or \cite[Proposition 2.28]{brady2021homologicaldehnfunctionsgroups}.  
    Hence $\operatorname{FV}^2_G$ is polynomially bounded regardless of $G$ having lattices or not. 
It would be desirable to generalize Theorem~\ref{prop:high-nil-fill} with $G$ simply connected nilpotent Lie group replacing $\Gamma$, the filling volume $\operatorname{FV}$ instead of $\operatorname{cFV}$, and $d>2$. 

As a result of the use of unrefined inequalities at several steps, and {contrarily to the impression that the upper bounds coming out of our proof might convey} if one makes them explicit, we do not expect the exponents of $\operatorname{FV}^d_\Gamma$  to increase with $d$ for $\Gamma$ nilpotent.
Some evidence for an {opposite} phenomenon (i.e., a convergence of these exponents towards $1$ when the nilpotency class is fixed) can actually be gathered from the case of abelian $\Gamma$ where $\operatorname{FV}_\Gamma^d(v) \asymp v^{1+1/d}$, from the exact determinations of the filling invariants in some cases in \cite{YoungFillNil,MR3472846,MR3982578} using a more refined approach than ours, and from the fact that the top-dimensional filling function has polynomial order $1+1/N$ where $N$ is the order of polynomial growth \cite{CSCIsop}.

\bibliography{biblionil.bib}

\newcommand{\etalchar}[1]{$^{#1}$}
\begin{thebibliography}{ABDY13}

\bibitem[ABDY13]{ABDYDehn}
Aaron Abrams, Noel Brady, Pallavi Dani, and Robert Young.
\newblock Homological and homotopical {D}ehn functions are different.
\newblock {\em Proc. Natl. Acad. Sci. USA}, 110(48):19206--19212, 2013.

\bibitem[BKS21]{brady2021homologicaldehnfunctionsgroups}
Noel Brady, Robert Kropholler, and Ignat Soroko.
\newblock Homological {D}ehn functions of groups of type $fp_2$, 2021.

\bibitem[Bri02]{MR1967746}
Martin~R. Bridson.
\newblock The geometry of the word problem.
\newblock In {\em Invitations to geometry and topology}, volume~7 of {\em Oxf.
  Grad. Texts Math.}, pages 29--91. Oxford Univ. Press, Oxford, 2002.

\bibitem[BS26]{bader2026higherkazhdanpropertyunitary}
Uri Bader and Roman Sauer.
\newblock Higher kazhdan property and unitary cohomology of arithmetic groups,
  2026.

\bibitem[CSC93]{CSCIsop}
Thierry Coulhon and Laurent Saloff-Coste.
\newblock Isop\'erim\'etrie pour les groupes et les vari\'et\'es.
\newblock {\em Rev. Mat. Iberoamericana}, 9(2):293--314, 1993.

\bibitem[CT17]{CTD}
Yves Cornulier and Romain Tessera.
\newblock Geometric presentations of {Lie} groups and their {Dehn} functions.
\newblock {\em Publ. Math., Inst. Hautes {\'E}tud. Sci.}, 125:79--219, 2017.

\bibitem[Dt02]{MR1902363}
Cornelia Dru\c~tu.
\newblock Quasi-isometry invariants and asymptotic cones.
\newblock volume~12, pages 99--135. 2002.
\newblock International Conference on Geometric and Combinatorial Methods in
  Group Theory and Semigroup Theory (Lincoln, NE, 2000).

\bibitem[ECH{\etalchar{+}}92]{MR1161694}
David B.~A. Epstein, James~W. Cannon, Derek~F. Holt, Silvio V.~F. Levy,
  Michael~S. Paterson, and William~P. Thurston.
\newblock {\em Word processing in groups}.
\newblock Jones and Bartlett Publishers, Boston, MA, 1992.

\bibitem[Fed96]{FedGMT}
Herbert Federer.
\newblock {\em Geometric measure theory.}
\newblock Class. Math. Berlin: Springer-Verlag, repr. of the 1969 ed. edition,
  1996.

\bibitem[FF60]{MR123260}
Herbert Federer and Wendell~H. Fleming.
\newblock Normal and integral currents.
\newblock {\em Ann. of Math. (2)}, 72:458--520, 1960.

\bibitem[GHR03]{GHR}
S.~M. Gersten, D.~F. Holt, and T.~R. Riley.
\newblock Isoperimetric inequalities for nilpotent groups.
\newblock {\em Geom. Funct. Anal.}, 13(4):795--814, 2003.

\bibitem[Gro93]{GromovAI}
M.~Gromov.
\newblock Asymptotic invariants of infinite groups.
\newblock In {\em Geometric group theory, {V}ol.\ 2 ({S}ussex, 1991)}, volume
  182 of {\em London Math. Soc. Lecture Note Ser.}, pages 1--295. Cambridge
  Univ. Press, Cambridge, 1993.

\bibitem[Gru19a]{MR4002271}
Moritz Gruber.
\newblock Filling invariants of stratified nilpotent {L}ie groups.
\newblock {\em Math. Z.}, 293(1-2):39--79, 2019.

\bibitem[Gru19b]{MR3982578}
Moritz Gruber.
\newblock The growth of the first non-{E}uclidean filling volume function of
  the quaternionic {H}eisenberg group.
\newblock {\em Adv. Geom.}, 19(3):415--420, 2019.

\bibitem[Gui73]{GuivNil}
Yves Guivarc'h.
\newblock Croissance polynomiale et p\'eriodes des fonctions harmoniques.
\newblock {\em Bull. Soc. Math. France}, 101:333--379, 1973.

\bibitem[LDT22]{LeDonneTripaldiCCLow}
Enrico Le~Donne and Francesca Tripaldi.
\newblock A cornucopia of {C}arnot groups in low dimensions.
\newblock {\em Anal. Geom. Metr. Spaces}, 10(1):155--289, 2022.

\bibitem[LNP26]{neumann2026quantitativepolynomialcohomologyapplications}
Antonio López~Neumann and Juan Paucar.
\newblock Quantitative polynomial cohomology and applications to $\textrm
  l^p$-measure equivalence, 2026.

\bibitem[Mal49]{MR28842}
A.~I. Malcev.
\newblock On a class of homogeneous spaces.
\newblock {\em Izv. Akad. Nauk SSSR Ser. Mat.}, 13:9--32, 1949.

\bibitem[Osi01]{OsinNil}
D.~V. Osin.
\newblock Subgroup distortions in nilpotent groups.
\newblock {\em Comm. Algebra}, 29(12):5439--5463, 2001.

\bibitem[Pan82]{MR676380}
Pierre Pansu.
\newblock Une in\'egalit\'e{} isop\'erim\'etrique sur le groupe de
  {H}eisenberg.
\newblock {\em C. R. Acad. Sci. Paris S\'er. I Math.}, 295(2):127--130, 1982.

\bibitem[Pan83]{PanCBN}
Pierre Pansu.
\newblock Croissance des boules et des g\'eod\'esiques ferm\'ees dans les
  nilvari\'et\'es.
\newblock {\em Ergodic Theory Dynam. Systems}, 3(3):415--445, 1983.

\bibitem[Ril03]{RileyHC}
T.~R. Riley.
\newblock Higher connectedness of asymptotic cones.
\newblock {\em Topology}, 42(6):1289--1352, 2003.

\bibitem[Var00]{Varopoulos}
Nick~Th. Varopoulos.
\newblock A geometric classification of {L}ie groups.
\newblock {\em Rev. Mat. Iberoamericana}, 16(1):49--136, 2000.

\bibitem[Wen05]{MR2153909}
S.~Wenger.
\newblock Isoperimetric inequalities of {E}uclidean type in metric spaces.
\newblock {\em Geom. Funct. Anal.}, 15(2):534--554, 2005.

\bibitem[Wen11a]{WengerAsRank}
Stefan Wenger.
\newblock The asymptotic rank of metric spaces.
\newblock {\em Comment. Math. Helv.}, 86(2):247--275, 2011.

\bibitem[Wen11b]{MR2783380}
Stefan Wenger.
\newblock Nilpotent groups without exactly polynomial {D}ehn function.
\newblock {\em J. Topol.}, 4(1):141--160, 2011.

\bibitem[You13]{YoungFillNil}
Robert Young.
\newblock Filling inequalities for nilpotent groups through approximations.
\newblock {\em Groups Geom. Dyn.}, 7(4):977--1011, 2013.

\bibitem[You16]{MR3472846}
Robert Young.
\newblock High-dimensional fillings in {H}eisenberg groups.
\newblock {\em J. Geom. Anal.}, 26(2):1596--1616, 2016.

\end{thebibliography}
\end{document}